\newtheorem{thm}{Theorem}
\newtheorem{prop}{Proposition}
\newtheorem{lem}{Lemma}
\newtheorem{cor}{Corollary}
\DeclareMathOperator{\Span}{Span}
\DeclareMathOperator{\ch}{char}
\title{Cubic surfaces with one rational line over finite fields}
\author{Jenny Cooley}
\address{Mathematics Institute\\University of Warwick\\ Coventry\\ CV4 7AL\\UK}
\email{j.a.cooley@warwick.ac.uk}
\date{\today}
\thanks{The author is supported by an Engineering and Physical Sciences Research Council (EPSRC) studentship}
\keywords{cubic surfaces, finite fields, rational points, Mordell-Weil problem, secant and tangent, generating set}
\begin{document}

\begin{abstract}
Let $\mathbb{F}_q$ be a finite field with $q=8$ or $q\geq 16$. Let $S$ be a smooth cubic surface defined over $\mathbb{F}_q$ containing at least one rational line. We use a pigeonhole principle to prove that all the rational points on $S$ are generated via tangent and secant operations from a single point.
\end{abstract}

\maketitle

\section{Introduction} %The motivation and my results
Let $S$ be a smooth cubic surface defined over a field $K$. One can define a secant and 
tangent process of generating new rational points from old. This process is somewhat 
analogous to the group operation on the rational points of an elliptic curve. The secant operation is defined as follows. Let 
$P$, $Q\in S(K)$ and $P\neq Q$. Let $\ell$ be the line
joining $P$ and $Q$. If $\ell \not \subset S$ then $\ell$ intersects $S$ in exactly
three points counting multiplicities, i.e. $\ell\cdot S=P+Q+R$ where $R \in S(K)$.
If $R$ is distinct from $P$ and $Q$ then we say we have generated $R$ from $P$ and $Q$.
For the tangent operation let $\Pi_P$ be the tangent plane to $S$ at $P$. Let $\ell$ be 
any $K$-line lying in $\Pi_P$. If $\ell\not\subset S$ then $\ell\cdot S=2P+R$. If $R \neq P$
then we have generated $R$ from $P$.

Although the set of rational points on a cubic surface is not a group nor is the secant and 
tangent process a binary operation, one can still pose questions about the size of a minimal
generating set of points. Problems of this type were first studied by Segre in~\cite{Segre},
and by Manin in~\cite{Ma1}. In order to study this problem we define the $\Span$ of a set of 
points in $S(K)$ as follows.

Let $\Sigma$ be a set of $K$-points on $S$. We define $\Span(\Sigma)$ as the minimal set
of points in $S(K)$ that is closed under the secant and tangent operations. So we have $\Sigma\subseteq\Span(\Sigma)\subseteq S(K)$. Note that we will write $\Span(P_1,\dots,P_n)$ for $\Span(\{P_1,\dots,P_n\})$
for ease of notation.

In~\cite{Cooley} the following theorem was proven.
\begin{thm}
\label{skew}
Let $K$ be a field with $\# K\geq 4$. Let $S$ be a smooth cubic surface defined over $K$ containing a skew pair of $K$-lines, $\ell$ and $\ell^\prime$. Let $P\in S(K)$ be a non-Eckardt point belonging to either $\ell$ or $\ell^\prime$. Then
\[
 Span(P)=S(K).
\]
\end{thm}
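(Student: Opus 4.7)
The plan is to bootstrap from the single point $P$ using the two conic pencils cut out on $S$ by the planes through $\ell$ and through $\ell'$. Since $P\in\ell\subset S$, the tangent plane $\Pi_P$ contains $\ell$, and so $\Pi_P\cap S=\ell+C_P$ for a plane conic $C_P$ through $P$. The non-Eckardt hypothesis prevents $C_P$ from degenerating into two lines both through $P$, which is the configuration that would block the construction. For each $K$-line $m$ in $\Pi_P$ through $P$ that is not a component of $\ell+C_P$, the tangent operation gives $m\cdot S=2P+R$ with $R\in C_P(K)$. Projecting $C_P$ from $P$ shows that these residual points $R$ run through every $K$-point of $C_P\setminus\{P\}$ as $m$ varies, so $C_P(K)\subseteq\Span(P)$; in particular the unique intersection point $Q:=\Pi_P\cap\ell'$ is generated.

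With $Q\in\ell'(K)$ in hand I would iterate the same construction with the roles of $\ell$ and $\ell'$ swapped. Either $Q$ itself is non-Eckardt, or, using $\#K\geq 4$, I can relay through a non-Eckardt $K$-point of $C_P$ to land at a non-Eckardt point of $\ell'$. The tangent-at-$Q$ argument then yields the residual conic of $\Pi_Q$, and in particular a new point $P_1:=\Pi_Q\cap\ell$ of $\ell$. Iterating this back-and-forth produces an orbit in $\ell\cup\ell'$; to force it to exhaust $\ell(K)\cup\ell'(K)$ I would combine it with secants joining a point of one residual conic to a point of a different one. Such a secant leaves both of their planes and meets $S$ in a third point lying on yet another fibre of the pencil, which can then be used to tangent-generate its own fibre. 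A pigeonhole count in the pencil $\mathbb{P}^1(K)$, again using $\#K\geq 4$, then forces every plane in the pencil eventually to be hit by a generated point.

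Once both $\ell(K)$ and $\ell'(K)$ sit in $\Span(P)$, every secant from a point of $\ell(K)$ to a point of $\ell'(K)$ is a line not contained in $S$ and so meets $S$ in a third $K$-point, giving the classical birational parametrisation $\ell\times\ell'\dashrightarrow S$; its image covers all of $S(K)$ except for the $K$-points lying on the finitely many other lines of $S$ that meet both $\ell$ and $\ell'$. Each such exceptional point is then caught by a final secant from the open subset already generated. I expect the main obstacle to lie in the middle step: proving that the ping-pong iteration, combined with cross-fibre secants, really sweeps out every $K$-point of $\ell$ and $\ell'$ rather than stabilising on a small proper subset, and handling Eckardt points and reducible fibres as they appear. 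The cardinality bound $\#K\geq 4$ is precisely what the pigeonhole argument in that step needs in order to rule out these traps.
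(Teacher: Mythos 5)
A preliminary remark: this paper does not actually prove Theorem~\ref{skew}; it quotes it from~\cite{Cooley}, so I am comparing your sketch with the strategy of that reference, whose key ingredient survives in the present paper as Lemma~\ref{gengamma}. Your overall architecture is the right one (generate the residual conic of $\Pi_P$, transfer to $\ell'$ via the point $\Pi_P\cdot\ell'$, then sweep $S(K)$ by the transversals to the skew pair), but there are two genuine gaps. First, your claim that projecting $C_P$ from $P$ yields $C_P(K)\subseteq\Span(P)$ fails in a degenerate case you did not flag: when $P$ is non-Eckardt but $\Gamma_P=\ell\cup m_1\cup m_2$ splits into three $K$-lines with $P=\ell\cdot m_1$ and $P\notin m_2$. (Your ``two lines both through $P$'' is the Eckardt configuration and is excluded by hypothesis; this one is not.) In that case every line of $\Pi_P$ through $P$ meets both $\ell$ and $m_1$ at $P$, so the residual point of the tangent operation always lies on $m_2$: the tangent process sweeps $m_2(K)$ only, never $m_1(K)$ or $\ell(K)\setminus\{P\}$, and in particular $\Pi_P\cdot\ell'$ may lie on $m_1$ and be missed. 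Handling exactly this case is what makes Lemma~\ref{gengamma} nontrivial.

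Second, and more seriously, your middle step --- that the ping-pong between $\ell$ and $\ell'$ combined with cross-fibre secants exhausts $\ell(K)\cup\ell'(K)$ --- is not an argument but an acknowledged obstacle: as written nothing rules out the orbit stabilising on a small subset, and the pigeonhole you invoke is never set up (no map, no count). The established proof avoids the ping-pong entirely: Lemma~\ref{gengamma} gives all of $\ell(K)$ from the single non-Eckardt point $P$ in one step; one then needs only to land a single non-Eckardt $K$-point on $\ell'$ (dodging the at most two, or five, Eckardt points of $\ell'$ by varying the basepoint along $\ell(K)$, which is where $\# K\geq 4$ is used) and applies the same lemma on $\ell'$. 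A minor further slip in your last step: it is not true that every secant from $\ell(K)$ to $\ell'(K)$ avoids $S$, since $S$ contains five common transversals of $\ell$ and $\ell'$; the correct statement is that for $Q\in S(K)$ not on a line of $S$, the unique transversal through $Q$ meeting $\ell$ and $\ell'$ is a $K$-line not contained in $S$ and meets them in $K$-points, which does complete the covering of $S(K)$ once both lines are generated.
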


In this paper I will prove the following Theorem.
\begin{thm}
\label{oneline}
Let $K=\mathbb{F}_q$ be a finite field with $q=8$ or $q\geq 16$. Let $S$ be a smooth cubic surface defined over $K$ containing at least one $K$-rational line. Then there exists a point $P\in S(K)$ such that $\Span(P)=S(K)$.
\end{thm}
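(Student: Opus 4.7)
The plan is to reduce to Theorem~\ref{skew} whenever possible, and to handle the remaining case by analysing the pencil of planes through the given line. Suppose first that $S$ contains a pair $\ell,\ell'$ of skew $K$-rational lines. A smooth cubic surface has only finitely many Eckardt points, so for $q\geq 8$ at least one of the $q+1$ $K$-points of $\ell$ is non-Eckardt, and Theorem~\ref{skew} immediately yields $\Span(P)=S(K)$. I therefore assume for the rest of the proof that every two $K$-rational lines of $S$ meet.

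Let $\ell\subset S$ be the given $K$-rational line, and choose $P\in\ell(K)$ avoiding the Eckardt points of $S$ and the finitely many points on $\ell$ where the residual conic of $T_PS\cdot S$ degenerates. Writing $T_PS\cdot S=\ell+C_P$, the conic $C_P$ is smooth, $K$-rational and passes through $P$, so $C_P(K)$ has $q+1$ elements, all of which lie in $\Span(P)$ via the tangent operation at $P$. Iterating, for each $Q\in C_P(K)\setminus\{P\}$ the tangent operation at $Q$ yields a further conic $C_Q\subset S$ with $C_Q(K)\subseteq\Span(P)$, giving on the order of $q$ such conics whose $K$-points all lie in $\Span(P)$.

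Now for the pigeonhole step. The pencil of planes through $\ell$ has exactly $q+1$ $K$-members $\Pi_t$, and $S(K)\setminus\ell(K)$ is the disjoint union of the $K$-rational residual conics $D_t$ defined by $\Pi_t\cdot S=\ell+D_t$. I would show that for each $t$ the union $\bigcup_Q C_Q$ meets $D_t(K)$: generically no $C_Q$ is contained in any $\Pi_t$, so each $C_Q$ meets each $\Pi_t$ in at most two $\overline{K}$-points, and summing these intersections over all $Q$ together with the hypothesis $q=8$ or $q\geq 16$ forces a genuine $K$-point of some $C_Q$ to lie on every $D_t$. Once $\Span(P)$ contains a single point $Q_t\in D_t(K)$, the projection of $D_t$ from $Q_t$ to $\ell$ is a $K$-isomorphism, so the secants joining $Q_t$ to each point of $\ell(K)\subseteq\Span(P)$ already sweep out all of $D_t(K)\setminus\{Q_t\}$. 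Combining over all $t$ gives $\Span(P)=S(K)$.

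The main obstacle is the pigeonhole/counting in the third paragraph: the bounds $q=8$ or $q\geq 16$ are precisely what is needed to rule out pathologies such as several of the $C_Q$ coinciding, a $C_Q$ meeting some $\Pi_t$ only along $\ell$, or both points of $C_Q\cap\Pi_t$ forming a single Galois-conjugate pair of non-$K$-points. A secondary bookkeeping difficulty is verifying that a single $P$ on $\ell$ can be chosen so that all the derived conics $C_Q$ avoid these pathologies simultaneously; this is possible precisely because $q+1$ dominates the finite list of exceptional points on $\ell$.
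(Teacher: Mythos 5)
Your opening reduction (dispatching the skew case via Theorem~\ref{skew}) and your second paragraph (generating $\ell(K)$ and $C_P(K)$ from a well-chosen $P$, then $\Gamma_Q(K)\subseteq\Span(Q)$ for $Q\in C_P(K)$) are sound in outline, though note that for $Q\notin\ell$ the curve $\Gamma_Q$ is an irreducible singular cubic, not a conic, and you must also avoid the finitely many Eckardt points on $C_P$, for which $\Gamma_Q(K)=\{Q\}$. The genuine gap is in your third paragraph. You want every residual conic $D_t$ in the pencil through $\ell$ to contain a $K$-point of some $\Gamma_Q$, and you argue this by ``summing these intersections over all $Q$.'' But the quantities you are summing are \emph{upper} bounds on $\#(\Gamma_Q\cap\Pi_t)$ over $\overline{K}$, and an upper bound on geometric intersection points cannot force any of them to be $K$-rational. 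Concretely, $\Gamma_Q\cap D_t$ consists (besides the point $\Pi_Q\cdot\ell$) of at most two points, and for every single $Q$ these could form a Galois-conjugate pair over $\mathbb{F}_{q^2}$; nothing in a cardinality count over $\overline{K}$ rules out this happening simultaneously for all $Q$ and some fixed $t$. No choice of $P$ on $\ell$ repairs this, because the obstruction is about rationality of intersection points, not about degeneracies of the configuration. There is also a secondary unaddressed case: when $D_t$ splits into $K$-lines, the projection-from-$Q_t$ step can produce lines contained in $S$, so sweeping out $D_t(K)$ from one point needs a separate argument.

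The paper's route around exactly this difficulty is Proposition~\ref{pigeon}, which is a different kind of pigeonhole: rather than trying to \emph{cover} every plane of the pencil, one fixes a hypothetical ungenerated point $Q$ and considers, for each already-generated $R\notin\Gamma_Q(K)$, the third intersection point of the line $QR$ with $S$ --- which is automatically $K$-rational. If $Q$ is not generated, the map $R\mapsto Q'$ is an injection from $T\setminus\Gamma_Q(K)$ into $(S(K)\setminus T)\setminus\{Q\}$, so it suffices that the generated set $T$ satisfy $|T|>\tfrac12|S(K)|+\tfrac{q+1}{2}$ and contain $\ell(K)$ for all $K$-lines. The set the paper feeds into this is $\bigcup_{Q\in\ell(K)}\Gamma_Q(K)$ together with one plane of type~(\ref{irconic}); since distinct tangent planes at points of $\ell$ meet only along $\ell$, its size is cleanly bounded below, whereas your curves $\Gamma_Q$ for $Q\in C_P(K)$ overlap in an uncontrolled way. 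To complete your argument you would need either to prove the rationality statement you are assuming, or to replace the covering step by an argument of the Proposition~\ref{pigeon} type; you would also need to treat separately the inseparable Gauss map in characteristic $2$ and the configurations of three coplanar $K$-lines, which the paper handles in Lemmas~\ref{gaminsep}, \ref{char2genEck} and \ref{charoddgenEck}.
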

This theorem extends Theorem~\ref{skew} in the case where $K=\mathbb{F}_q$ and $q=8$ or $q\geq16$. 
Over $\mathbb{Q}$ there is a family of cubic surfaces, each containing a $K$-line, for which there exists no point $P\in S(K)$ such that $\Span(P)=S(K)$, and in fact the number of points required to generate $S(K)$ for this family is unbounded. Thus the behaviour of the secant and tangent process over finite fields is notably different to that over $\mathbb{Q}$~\cite{Siksek}.

I would like to thank Daniel Loughran, Samir Siksek, Efthymios Sofos and Damiano Testa for many useful and enlightening discussions during the preparation of this paper.

%Should Lemma~\ref{gaminsep} be a main result too? Because it works for infinite fields of char 2.

\section{Useful results} %Results that I will use.
\label{intro}
Throughout this paper we will refer to the tangent plane to $S$ at a point $P$ as $\Pi_P$. 
We also define $\Gamma_P=\Pi_P\cdot S$. This is a cubic curve that is singular at $P$ and may or may not be irreducible.
The \emph{Gauss map} on $S(\overline{K})$ is the map that takes a point $P$ to its tangent plane $\Pi_P$. We define $\gamma_\ell$ to be the restriction of the Gauss map to a $K$-line $\ell\subset S$. Before discussing some further properties of the 
Gauss map, we state the following lemma; a proof of which can be found in~\cite{Siksek}.
\begin{lem}
\label{ellingamP}
Let $P\in S(\overline{K})$. The curve $\Gamma_P$ contains every $\overline{K}$-line on $S$ that passes
through $P$.
\end{lem}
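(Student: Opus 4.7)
The plan is to reduce the lemma to the infinitesimal fact that a line lying on $S$ and passing through $P$ must be contained in the tangent plane $\Pi_P$. Once this is done, the hypothesis $\ell\subset S$ together with $\ell\subset\Pi_P$ gives $\ell\subset\Pi_P\cap S$, which is precisely the support of the cubic plane curve $\Gamma_P=\Pi_P\cdot S$, and the conclusion follows.

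To carry this out, I would fix a homogeneous defining cubic $F\in\overline{K}[x_0,x_1,x_2,x_3]$ for $S$ and pick any point $Q\in\ell$ with $Q\neq P$, so that $\ell$ is parametrised by $[s:t]\mapsto sP+tQ$. The assumption $\ell\subset S$ translates into the polynomial identity $F(sP+tQ)\equiv 0$ in the variables $s,t$.

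Next, I would expand this identity by multilinearity into a binary cubic form in $(s,t)$ and extract the coefficient of $s^2t$. Up to a nonzero constant, that coefficient equals $\sum_i Q_i\,\partial_i F(P)$, so it must vanish; this is exactly the condition that $Q$ lies on the tangent plane $\Pi_P=\{x:\sum_i x_i\,\partial_i F(P)=0\}$. The point $P$ itself lies on $\Pi_P$ by Euler's identity together with $F(P)=0$, so the whole line through $P$ and $Q$ lies in $\Pi_P$, giving $\ell\subset\Pi_P\cap S\subset\Gamma_P$.

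I do not expect a genuine obstacle. The content is essentially the standard fact that a line contained in $S$ is its own tangent line at every one of its points, hence must lie in $T_PS=\Pi_P$; smoothness of $S$ is not even needed at $P$ beyond the existence of the tangent plane, which is guaranteed by the hypothesis on $S$.
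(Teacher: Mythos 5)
Your argument is correct. The paper itself gives no proof of this lemma, deferring to~\cite{Siksek}, so there is nothing to compare against; your write-up is the standard one and is self-contained. One point worth making explicit, since this paper works over fields of characteristic $2$ and $3$: extracting the coefficient of $s^2t$ from the binary cubic $F(sP+tQ)$ is a polynomial identity over $\mathbb{Z}$ involving only first partial derivatives, so no division by factorials occurs and the argument is valid in every characteristic; likewise $P\in\Pi_P$ follows from Euler's identity $\sum_i x_i\,\partial_i F = 3F$ even when $3=0$ in $K$, since the right-hand side vanishes on $S$ either way.
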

This implies that if $P\in\ell\subset S$ then $\ell\subset\Pi_P$. As discussed in~\cite{Siksek}, 
$\gamma_\ell$ has degree~$2$ and we define a \emph{parabolic point} in $\ell(\overline{K})$ to be a point
of ramification of $\gamma_\ell$. A special case of a parabolic point is an \emph{Eckardt point}, which is a point where
three lines in $S$ meet. By Lemma~\ref{ellingamP} all three lines must be coplanar, and since $\Gamma_P$ has degree $3$, we cannot have more than $3$ lines in $S$ meeting in a single point. The following lemma, also proved in~\cite{Siksek}, gives useful 
information on the geometry of $\Gamma_P$ for $P\in\ell(\overline{K})$.
\begin{lem}
\label{gammapts}
Let $\ell$ be a $K$-line contained in $S$.
\renewcommand{\theenumi}{\roman{enumi}}
\begin{enumerate}
 \item  If $\ch(K) \ne 2$ then $\gamma_\ell$ is separable. Precisely two points $P\in\ell(\overline{K})$ are
         parabolic, and so there are at most two Eckardt points on $\ell$.
\item If $\ch(K) = 2$ and $\gamma_\ell$ is separable then there is precisely one point $P\in\ell(\overline{K})$
         which is parabolic and so at most one Eckardt point on $\ell$.
\item If $\ch(K) = 2$ and $\gamma_\ell$ is inseparable then every point $P\in\ell(\overline{K})$ is parabolic
         and the line $\ell$ contains exactly $5$ Eckardt points.
\end{enumerate}
\end{lem}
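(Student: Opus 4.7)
The plan is to study $\gamma_\ell$ as a degree~$2$ morphism $\mathbb{P}^1 \to \mathbb{P}^1$, where the target is identified with the pencil of planes containing $\ell$ (by Lemma~\ref{ellingamP}, every tangent plane $\Pi_P$ at $P \in \ell$ contains $\ell$, so the pencil is the correct target). When $\gamma_\ell$ is separable, Riemann-Hurwitz gives $-2 = 2 \cdot (-2) + \deg R$, so the ramification divisor has total degree $2$. In characteristic $\ne 2$ the ramification is tame and each ramified point contributes $e_P - 1 = 1$; thus there are exactly two parabolic points, proving (i). In characteristic $2$, a separable degree-$2$ map is necessarily wildly ramified, so the different exponent at each ramified point satisfies $d_P \geq e_P = 2$, which together with $\deg R = 2$ forces exactly one parabolic point, giving (ii).

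If $\gamma_\ell$ is inseparable in characteristic~$2$, then since its degree equals $\ch K = 2$, the map is purely inseparable and agrees with the Frobenius of $\ell$ up to isomorphism. Frobenius is a bijection on closed points with every point totally ramified, so every $P \in \ell(\overline{K})$ is parabolic, giving the first assertion of (iii).

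To count Eckardt points I would establish the following dichotomy: for each tritangent plane $\Pi = \ell + m + m'$ containing $\ell$, both $\ell \cap m$ and $\ell \cap m'$ lie in $\gamma_\ell^{-1}(\Pi)$, because for $P = \ell \cap m$, Lemma~\ref{ellingamP} forces $\ell, m \subset \Pi_P$, whence $\Pi_P = \Pi$. These two preimages coincide precisely when $m \cap m' \in \ell$, in which case $P$ is Eckardt; conversely any Eckardt point on $\ell$ arises this way, so every Eckardt point on $\ell$ is parabolic. This immediately gives the Eckardt bounds in (i) and (ii). For the exact count in (iii), I would invoke the Schl\"afli configuration: $\ell$ meets exactly $10$ of the other $26$ lines on $S$, and these group into $5$ coplanar pairs, yielding exactly five tritangent planes through $\ell$. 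Since every point of $\ell$ is parabolic, each such plane has a unique preimage in $\ell$, which by the dichotomy is an Eckardt point; these five points are distinct because each determines its tangent plane.

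The main technical subtleties I expect are confirming the exact wild-ramification contribution at a ramified point in characteristic $2$ and justifying rigorously that a degree-$2$ inseparable cover in characteristic $2$ must be Frobenius; both are standard facts from the theory of covers of curves, but need to be quoted carefully to close off the argument in (ii) and (iii).
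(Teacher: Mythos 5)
The paper never proves this lemma itself: it is quoted from~\cite{Siksek}, where the argument is done in explicit coordinates (put $\ell=\{x_2=x_3=0\}$, write the cubic form as $x_2A+x_3B$, identify $\gamma_\ell$ with the map $(x_0:x_1)\mapsto(A:B)$ given by two binary quadratics, and read off the ramification directly). Your proof reaches the same conclusions by the abstract route --- Riemann--Hurwitz with the tame/wild dichotomy for (i) and (ii), and the factorisation of a purely inseparable degree-$p$ cover through Frobenius for (iii) --- and your Eckardt count via the five tritangent planes through $\ell$ is the same mechanism the paper itself uses elsewhere. This is correct and arguably cleaner, at the cost of importing the wild-ramification inequality $d_P\geq e_P$, which the coordinate computation avoids. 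Two small points you should make explicit to close the argument. First, the separability assertion in (i) is itself part of the statement: it holds because the inseparable degree of a morphism is a power of the characteristic, so a degree-$2$ map can only be inseparable when $\ch(K)=2$. Second, in your dichotomy you implicitly use that the set-theoretic fibre $\gamma_\ell^{-1}(\Pi)$ over a tritangent plane $\Pi$ with $\Pi\cdot S=\ell+m+m'$ is contained in $\ell\cap\mathrm{Sing}(\ell+m+m')=\{\ell\cdot m,\ \ell\cdot m'\}$; this is what guarantees that when $\ell\cdot m=\ell\cdot m'$ the degree-$2$ fibre is non-reduced at that point rather than acquiring a second, unramified preimage somewhere else on $\ell$. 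With those two sentences added the proof is complete.
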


%give number of points on cusp, split node and non-split node.

The degree of $\gamma_\ell$ is $2$ so non-parabolic points in $\ell(K)$ come in pairs. Therefore for $\mathbb{F}_q$ with $q$ odd there are either zero or two parabolic points in $\ell(K)$. For $q$ even there is exactly one parabolic point in $\ell(K)$. This follows from $\ell$ being a copy of $\mathbb{P}_K^1$ and hence $|\ell(K)|=q+1$.

The configuration of the $27$ lines on a smooth cubic surface~\cite{Reid} implies that when a smooth cubic surface contains four $K$-lines, it must contain a skew pair. Therefore we need only prove Theorem~\ref{oneline} for cubic surfaces containing no more than three $K$-lines, all of which must be coplanar.

The following result was proved in~\cite{Cooley}.
\begin{lem}
\label{gengamma}
Let $K$ be a field with at least $4$ elements and $S$ a smooth cubic surface defined over $K$. Let $\ell$ be a $K$-line contained in $S$. Let $P \in \ell(K)$ and suppose $P$ is not an Eckardt point. Then
\[
\ell(K) \subseteq \Gamma_P(K) \subseteq \Span(P).
\]
\end{lem}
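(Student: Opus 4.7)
The plan is to prove the two inclusions separately. The inclusion $\ell(K) \subseteq \Gamma_P(K)$ is immediate from Lemma~\ref{ellingamP}, since $P \in \ell \subset S$ forces $\ell \subset \Gamma_P$. For the harder inclusion $\Gamma_P(K) \subseteq \Span(P)$, I would begin by decomposing $\Gamma_P = \ell + C$ as a divisor in the plane $\Pi_P$, where $C$ is a (possibly reducible or degenerate) conic. The hypothesis that $P$ is not Eckardt ensures that at most two lines of $S$ pass through $P$ in $\Pi_P$, which constrains the possible decompositions.

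In the first step, I would generate every $Q \in \Gamma_P(K)$ for which the line $L := \overline{PQ}$ is not contained in $S$. Since $L$ lies in the tangent plane $\Pi_P$ at the smooth point $P \in S$, the intersection $L \cdot S$ has $P$ as a double component, so $L \cdot S = 2P + R$. A direct computation of $L \cdot \Gamma_P$ inside $\Pi_P$, using the decomposition $\Gamma_P = \ell + C$ and the fact that $P$ lies on both $\ell$ and $L$, shows $R = Q$; hence the tangent operation at $P$ along $L$ generates $Q$. The $K$-points of $\Gamma_P$ excluded by this step are precisely those lying on the (at most two) $K$-lines of $S$ through $P$, one of which is $\ell$.

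The second step generates the remaining $K$-points---those on $\ell$, and on any second $K$-line of $S$ through $P$---by taking secants of pairs of points already obtained. For a target $Q \in \ell(K) \setminus \{P\}$, I would seek a $K$-line $L'$ through $Q$ in $\Pi_P$, with $L' \neq \ell$, whose intersection with $C$ consists of two $K$-rational points $R_1, R_2$, neither equal to $P$; then the secant $\overline{R_1 R_2}$ meets $S$ in $R_1 + R_2 + Q$, placing $Q$ in $\Span(R_1, R_2) \subseteq \Span(P)$.

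The key obstacle, and where the hypothesis $\#K \geq 4$ enters, is verifying existence of such a secant line $L'$. Projecting $C$ from $Q$ gives a degree-$2$ map whose $K$-fibers contain $0$ or $2$ $K$-points, with at most two ramified $K$-fibers. The identity $2a + b = \#K + 1$, where $a$ counts the $K$-lines through $Q$ meeting $C$ in two $K$-points and $b \leq 2$ counts the $K$-rational tangent lines from $Q$ to $C$, yields $a \geq 2$ whenever $\#K \geq 4$. Since the only $K$-line through $Q$ that passes through $P$ is $\ell$ itself, discarding $\ell$ (if it lies among these $a$ lines) still leaves a valid choice of $L'$. The cases where $C$ is reducible, or where $\Gamma_P$ contains a second $K$-line through $P$, require minor variations of the same pigeonhole argument applied to the appropriate residual conic.
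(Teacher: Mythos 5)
The paper does not actually prove this lemma --- it is quoted from the earlier paper \cite{Cooley} --- so I am judging your argument on its own terms. Your first inclusion and your Step 1 are correct: since $P\in C$ (as $\Gamma_P=\ell\cup C$ is singular at $P$ and $\ell$ is smooth there), every line of $\Pi_P$ through $P$ not lying in $S$ cuts out $2P+Q$, and this sweeps up all of $C(K)\setminus\ell$ when $C$ is irreducible. Your counting $2a+b=\#K+1$ with $b\le 2$ is also sound for a target $Q\in\ell(K)$ with $Q\notin C$. The trouble is with the points of $\ell(K)\cap C(K)$. Write $\ell\cdot C=P+Q_0$; when $P$ is not parabolic, $Q_0\ne P$ is a $K$-point and is precisely the other point of $\ell$ with $\Pi_{Q_0}=\Pi_P$. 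Every $K$-line $L'$ of $\Pi_P$ through $Q_0$ satisfies $L'\cdot\Gamma_P=(L'\cdot\ell)+(L'\cdot C)=Q_0+(Q_0+R)=2Q_0+R$, so no secant or tangent configuration supported on $\Gamma_P(K)$ can ever produce $Q_0$ as the residual point: your claimed intersection $R_1+R_2+Q$ is actually $2Q_0+R$ for this target. Since a line not contained in $\Pi_P$ meets $\Gamma_P$ in at most one point, one genuinely must first leave the plane $\Pi_P$ (e.g.\ by tangent operations at points of $C$, generating points of $\Gamma_R$ for $R\in C(K)$, and then returning to $Q_0$ by a secant). That is a missing idea, not a boundary case of your pigeonhole.

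The second gap is the reducible case. If $C=m\cup m'$ and $P$ is not Eckardt, then $m,m'$ are forced to be individually $K$-rational (conjugate lines would both contain $P$, making $P$ Eckardt), with, say, $P\in m$, $P\notin m'$. Your Step 1 then only generates $m'(K)$ minus its two intersection points with $\ell$ and $m$; it yields nothing on $m\setminus\{P\}$. Moreover every $K$-line of $\Pi_P$ through a target $Q\in\ell(K)\setminus\{P\}$ other than $\ell$ meets $\Gamma_P$ in $Q$ plus one point of $m$ and one point of $m'$, and the point of $m$ is not yet in your span, so the proposed secants are unavailable. This is a structurally different situation from the irreducible case, not a ``minor variation of the same pigeonhole argument applied to the residual conic,'' and it needs its own treatment (again by escaping $\Pi_P$ through tangent operations at the generated points of $m'$). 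Until these two cases are handled, the inclusion $\Gamma_P(K)\subseteq\Span(P)$ is not established.
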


\section{Proof of Theorem~\ref{oneline}}
First note that any $K$-line $\ell$ in $\mathbb{P}_K^3$ is a copy of $\mathbb{P}_K^1$
and hence when $K=\mathbb{F}_q$ we have $|\ell(K)|=q+1$. By the configuration of the lines on a smooth cubic surface there are exactly ten
$\overline{K}$-lines in $S$ intersecting any given $\overline{K}$-line in $S$~\cite{Reid}. Thus there are at most $5$ Eckardt 
points in $\ell(K)$. This along with Lemma~\ref{gammapts} implies that when $q=3$ or $q\geq 5$ we are guaranteed the existence of a non-Eckardt point in $S(K)$.

Let $\ell\subset S$ be a $K$-line. There is a pencil of planes passing through $\ell$, hence there is a bijection between rational points in $\mathbb{P}_K^1$ and $K$-planes through $\ell$. Therefore we know that there are exactly $q+1$ distinct $K$-planes through $\ell$. Let $\Pi$ be any such plane and $\Gamma=\Pi\cdot S$. Then $\Gamma=\ell\cup C$ where $C$ is some conic defined over $K$ that may be absolutely irreducible or may decompose into two lines. The possible cases are enumerated below.
\begin{enumerate}
\item \label{ratconic} $C$ is absolutely irreducible and meets $\ell$ in two distinct $K$-points.
\item \label{ratlines} $C$ decomposes into two $K$-lines, $m$ and $m^\prime$, and $m\cdot\ell \ne m^\prime\cdot\ell$.
\item \label{irconic} $C$ is absolutely irreducible and meets $\ell$ in two distinct points that are not $K$-points, but are defined over $\mathbb{F}_{q^2}$.
\item \label{irlines}$C$ decomposes into two Galois conjugate lines $m$, $m^\prime$, that are defined over $\mathbb{F}_{q^2}$ but not $K$. We have $m\cdot m^\prime$ is a $K$-point, $m\cdot\ell\ne m^\prime\cdot\ell$, and $m\cdot\ell$, $m^\prime\cdot\ell$ are defined over $\mathbb{F}_{q^2}$ but are not $K$-points.
\item \label{parabconic} $C$ is absolutely irreducible and $\ell$ is tangent to $C$, so meeting $C$ in exactly one $K$-point.
\item \label{ratEck} $C$ decomposes into two $K$-lines, $m$ and $m^\prime$, and $m\cdot\ell=m^\prime\cdot\ell$, and hence is an Eckardt point in $\ell(K)$.
\item \label{irEck} $C$ decomposes into two Galois conjugate lines $m$, $m^\prime$, that are defined over $\mathbb{F}_{q^2}$ but not $K$. We have $m\cdot\ell=m^\prime\cdot\ell$, and hence is an Eckardt point in $\ell(K)$.
\end{enumerate}
Note that in case~(\ref{irEck}) we have $\Gamma(K)=\ell(K)$ since there are no $K$-points on $m$ and $m^\prime$ other than the intersection point $m\cdot\ell=m^\prime\cdot\ell=m\cdot m^\prime$.

In cases (\ref{parabconic}), (\ref{ratEck}) and (\ref{irEck}) the conic $C$ intersects the line $\ell$ in a single point. Such a point $P$ is a parabolic point and $\Pi$ is the tangent plane to $S$ at $P$ denoted $\Pi_P$. The number of parabolic points on $\ell$ is determined by whether the Gauss map on $\ell$ is separable or inseparable as described in section~\ref{intro}. 
We deal with the case where $\ch(K)=2$ and $\gamma_\ell$ is inseparable separately. In this case
%We split the proof of Theorem~\ref{oneline} into three cases: the Gauss map on $\ell$ is inseparable (and hence $\ch(K)=2$); $\ch(K)=2$ and $\gamma_\ell$ is separable; and $\ch(K)$ is odd and $\gamma_\ell$ is separable.
%For the case where $\ch(K)=2$ and $\gamma_\ell$ is inseparable, 
we can prove Theorem~\ref{oneline} directly without invoking a pigeonhole principle; indeed this result also holds for infinite fields of characteristic $2$.
\begin{lem}
\label{gaminsep}
Let $K$ be a field of characteristic $2$ containing at least $8$ elements. Let $S$ be a smooth cubic surface defined over $K$ containing at least one $K$-line, $\ell$, upon which the Gauss map, $\gamma_\ell$, is inseparable. Let $P\in\ell(K)$ be a non-Eckardt point. Then
\[
\Span(P)=S(K).
\]
\end{lem}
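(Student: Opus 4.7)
The overall plan is to generate $\ell(K)$ first, extend to the residual conic on every non-Eckardt $K$-plane through $\ell$, and then dispose of the Eckardt contributions via a direct secant or tangent step. The inseparability of $\gamma_\ell$ is what makes the second stage reach \emph{every} plane through $\ell$, not only the ``generic'' ones.

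Concretely, Lemma~\ref{gengamma} applied to the non-Eckardt point $P$ gives $\ell(K) \subseteq \Gamma_P(K) \subseteq \Span(P)$. For any non-Eckardt $Q \in \ell(K)$ we then have $Q \in \Span(P)$, so Lemma~\ref{gengamma} applied to $Q$ gives $C_Q(K) \subseteq \Gamma_Q(K) \subseteq \Span(P)$. The point is that $\gamma_\ell$, being inseparable of degree $2$ in characteristic $2$, factors through the Frobenius on $\ell \cong \mathbb{P}^1$; since Frobenius is a bijection on $K$-points, $\gamma_\ell$ induces a bijection between $\ell(K)$ and the pencil of $K$-planes through $\ell$. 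Hence for any $R \in S(K) \setminus \ell(K)$, the unique $K$-plane through $\ell$ containing $R$ equals $\Pi_Q$ for exactly one $Q \in \ell(K)$, and $R \in C_Q(K)$. If $Q$ is non-Eckardt we are done; if $Q$ is an Eckardt of case~(\ref{irEck}) then $\Gamma_Q(K) = \ell(K)$, contradicting $R \notin \ell(K)$; so the only residual situation is $Q$ an Eckardt of case~(\ref{ratEck}), where $C_Q = m + m'$ for two $K$-lines $m, m'$ through $Q$ and $R \in (m \cup m')(K) \setminus \{Q\}$.

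For this last situation, say $R \in m(K) \setminus \{Q\}$, I would pick any $A \in C_{Q'}(K) \setminus \{Q'\}$ for some non-Eckardt $Q' \in \ell(K) \setminus \{Q\}$; the hypothesis $q \geq 8$ (together with at most five Eckardts on $\ell$) makes such an $A$ plentiful. Then $A \in \Span(P)$ and $A \notin \Pi_Q$. The $K$-line $L = AR$ is neither contained in $S$ (no $K$-line of $S$ passes through $A$, since $A \notin \ell \cup m \cup m'$) nor in $\Pi_Q$, so the third intersection $B = L \cdot S - R - A$ is a $K$-point of $S$ lying outside $\Pi_Q \cdot S = \ell \cup m \cup m'$, and therefore lies in some $C_{Q''}(K)$ with $Q''$ non-Eckardt. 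Thus $B \in \Span(P)$. If $R \notin \Pi_A$ the secant operation on $A$ and $B$ produces $R$, and if $R \in \Pi_A$ then $L \subset \Pi_A$ forces $B = A$ and $L \cdot S = 2A + R$, so the tangent operation at $A$ along $L$ produces $R$. In either case $R \in \Span(P)$.

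The main obstacle I anticipate is the possibility that $\ell$ carries several Eckardt points of case~(\ref{ratEck}), in which case $B$ above could land on the bad lines of another Eckardt and not a priori lie in $\Span(P)$. In the setting of Theorem~\ref{oneline}, however, $S$ contains at most three $K$-lines (four or more would produce a skew pair, putting us under Theorem~\ref{skew}), so $\ell$ supports at most one case~(\ref{ratEck}) Eckardt and the argument above applies without complication. Since no step uses counting beyond the existence of a single non-Eckardt $Q' \in \ell(K) \setminus \{Q\}$, the same proof works verbatim over infinite fields of characteristic~$2$, in line with the parenthetical remark preceding the lemma.
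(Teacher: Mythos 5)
Your overall strategy is the same as the paper's: use Lemma~\ref{gengamma} twice to get $\ell(K)$ and then every residual conic $C_{Q'}$ for non-Eckardt $Q'\in\ell(K)$ into $\Span(P)$, observe that at most one plane through $\ell$ can be of type~(\ref{ratEck}) (else $S$ has a skew pair and Theorem~\ref{skew} applies), and finally reach the points of that one exceptional plane by a secant/tangent step from an already-generated point off that plane. However, the final step has a genuine gap. Writing $L=AR$ with $A$ generated and $R$ the target, you have $L\cdot S = A+R+B$, and you only analyse the dichotomy $R\in\Pi_A$ versus $R\notin\Pi_A$. That dichotomy controls whether $B=A$, but not whether $B=R$. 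If $A\in\Pi_R$ (equivalently $A\in\Gamma_R$), then $L\subset\Pi_R$ and $L\cdot S=2R+A$, so $B=R$: your claim that $B$ lies outside $\Pi_Q\cdot S$ fails, the ``secant operation on $A$ and $B$'' is circular (it requires the very point $R$ you are trying to produce), and the tangent operation at $A$ does not apply since $L\not\subset\Pi_A$. Nothing in your choice of $A$ rules this out.

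The repair is exactly the extra condition the paper imposes: choose the auxiliary point outside $\Gamma_R(K)$, not merely outside $\Pi_Q$. This is possible because $\Gamma_R\cap C_{Q'}\subseteq(\Pi_R\cap\Pi_{Q'})\cap C_{Q'}$, and $\Pi_R\cap\Pi_{Q'}$ is a line not contained in $S$, hence meets the conic $C_{Q'}$ in at most two points; since each $C_{Q'}(K)\setminus\ell$ has $q\geq 8$ points and there are non-Eckardt choices of $Q'$, admissible $A$ exist in abundance (and trivially so over an infinite field). With $A\notin\Gamma_R$ the multiplicity of $R$ in $L\cdot S$ is one, so $B\neq R$, and your case analysis then goes through: either $B\neq A$ and $B$ lies on some $C_{Q''}$ with $Q''$ non-Eckardt, so the secant on $A,B$ yields $R$, or $B=A$ and the tangent at $A$ yields $R$. (A minor further caveat: your assertion that $\gamma_\ell$ is bijective on $K$-points relies on Frobenius being surjective, i.e.\ on $K$ being perfect; this is automatic for $\mathbb{F}_q$ but not for arbitrary infinite fields of characteristic $2$, so the ``verbatim'' extension deserves a word of care.)
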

\begin{proof}
Since $\gamma_\ell$ is inseparable every $K$-point on $\ell$ is parabolic. This means that every point in $S(K)$ belongs to $\Gamma_P(K)$ for some point in $\ell(K)$. By the configuration of the $27$ lines on a cubic surface~\cite{Reid}, there are at most five Eckardt points in $\ell(K)$. If more than one of these Eckardt points is of type~(\ref{ratEck}) then we have more than three $K$-lines in $S$ and hence a skew pair. Thus we apply Theorem~\ref{skew} to obtain the result. Therefore we may assume that there is at most one Eckardt point of type~(\ref{ratEck}) in $\ell(K)$. Let $P\in\ell(K)$ be a non-Eckardt point. By Lemma~\ref{gengamma} we have $\ell(K)\subset\Gamma_P(K)\subset\Span(P)$. Applying Lemma~\ref{gengamma} again gives us that $\Gamma_Q(K)\subset\Span(P)$ for all non-Eckardt points $Q\in\ell(K)$. Hence if there are no Eckardt points of type~(\ref{ratEck}) in $\ell(K)$ then we have generated all of the points in $S(K)$. Let us then suppose that there is an Eckardt point of type~(\ref{ratEck}) in $\ell(K)$, which we will denote $E$. Again by Lemma~\ref{gengamma} we know that
\[
\bigcup_{R\in\ell(K)\setminus\{E\}}\Gamma_R(K)\subset\Span(P).
\]
Let $G:=S(K)\setminus\Gamma_E(K)$. Observe that $G\subset\bigcup_{R\in\ell(K)\setminus\{E\}}\Gamma_R(K)\subset\Span(P)$. Let $Q\in\Gamma_E(K)\setminus\ell$. Note that all points $Q^\prime\in G$ lie on some conic $C$ of type~(\ref{parabconic}), that is in $\Gamma_{P^\prime}(K)\setminus\ell$ for some $P^\prime\in\ell(K)$. The intersection $\Pi_Q\cdot\Pi_{P^\prime}$ is a $K$-line in $\mathbb{P}_K^3$, not contained in $\Pi_E$, therefore $\Pi_Q\cdot\Pi_{P^\prime}$ is not contained in $S$, and hence intersects $C$ exactly twice counting multiplicities. Thus the set $G\setminus\Gamma_Q(K)$ is non-empty. Let $Q^\prime\in G\setminus\Gamma_Q(K)$. Let $m$ be the line joining $Q$ and $Q^\prime$. We know that $m\not\subset S$ since $m\not\subset\Gamma_E$. Therefore $m\cdot S=Q+Q^\prime+R$, where $R\ne Q$ and hence $R\not\in\Pi_E$. From whence $R\in G$, which gives us 
$\Gamma_E(K)\setminus\ell\subset\Span(P)$
and
$\ell(K)\subset\Span(P)$,
from which the result follows. 
\end{proof}

The following proposition explains the pigeonhole principle required to prove Theorem~\ref{oneline} for the remaining cases.
\begin{prop}
\label{pigeon}
Let $S$ be a smooth cubic surface over $K=\mathbb{F}_q$. Let $T\subseteq S(K)$ be such that $\ell(K)\subset T$ for all $K$-lines $\ell$ contained in $S$, and  $|T|>\frac{1}{2}|S(K)|+\frac{q+1}{2}$. Then 
\[
\Span(T)=S(K).
\]
\end{prop}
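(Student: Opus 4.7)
The plan is to show that for every $P\in S(K)\setminus T$, there exists a line $\ell\subset\mathbb{P}_K^3$ through $P$, not contained in the tangent plane $\Pi_P$, whose residual intersection with $S$ is a pair $\{Q,R\}\subseteq T$. Such an $\ell$ immediately puts $P$ in $\Span(T)$: if $Q\neq R$ then the secant of $Q$ and $R$ generates $P$, and if $Q=R$ then the tangent to $S$ at $Q$ along $\ell$ generates $P$.

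The first step uses the hypothesis $\ell(K)\subseteq T$ for every $K$-line $\ell\subset S$: since $P\notin T$, no $K$-line of $S$ passes through $P$. Consequently $\Gamma_P=\Pi_P\cdot S$, a cubic in $\Pi_P$ singular at $P$, has no $K$-rational linear component, and over $\overline{K}$ it is either an irreducible (nodal or cuspidal) cubic or the union of three Galois-conjugate non-$K$-rational lines concurrent at $P$. In either case $|\Gamma_P(K)|\leq q+2$.

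For the pigeonhole, consider the $q^2+q+1$ lines through $P$ in $\mathbb{P}_K^3$: the $q+1$ lines lying in $\Pi_P$ account precisely for the $K$-points of $\Gamma_P$, while each of the remaining $q^2$ lines cuts $S$ in $P+Q+R$ with $Q,R\neq P$ and its residual intersection accounts for the points of $S(K)\setminus\Gamma_P(K)$. Suppose for contradiction that no non-$\Pi_P$ line is good. Then each such line contains at most one $T$-point among its residual $K$-points, and moreover any such line contributing a $T$-point must have two distinct residual $K$-points (otherwise it is a tangent with $Q=R\in T$, which is good). Hence on every non-$\Pi_P$ line the number of residual $T$-points is at most the number of residual non-$T$ $K$-points; summing gives
\[
2\,|T\setminus\Gamma_P(K)|\;\leq\;|S(K)|-|\Gamma_P(K)|.
\]
Combined with $|T\cap\Gamma_P(K)|\leq|\Gamma_P(K)|-1$ (as $P\in\Gamma_P(K)\setminus T$), this yields
\[
2|T|\;\leq\;|S(K)|+|\Gamma_P(K)|-2\;\leq\;|S(K)|+q,
\]
contradicting $|T|>\tfrac{1}{2}|S(K)|+\tfrac{q+1}{2}$.

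The main obstacle is the sharpness of the bound $|\Gamma_P(K)|\leq q+2$, attained when $\Gamma_P$ is an irreducible nodal cubic with a non-split node; this sharpness is exactly what dictates the correction term $\tfrac{q+1}{2}$ in the hypothesis and leaves essentially no slack in the counting.
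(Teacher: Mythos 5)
Your proof is correct and is essentially the paper's argument: the paper packages your per-line inequality as an injection from $G=T\setminus\Gamma_Q(K)$ into $B=(S(K)\setminus T)\setminus\{Q\}$, sending $R$ to the third intersection point of the line $QR$, and derives the same contradiction from $|\Gamma_Q(K)|\leq q+2$ together with the hypothesis on $|T|$. Your bookkeeping is marginally tighter (you also discard the non-$T$ points of $\Gamma_P(K)$ rather than only the point $P$ itself), but this changes nothing essential.
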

\begin{proof}
Suppose we wish to generate a point $Q\not\in T$ and hence not lying on any $K$-line in $S$. In order to do so we require points $R$, $R^\prime\in T$ and a $K$-line $m$ such that $m\cdot S=R+R^\prime+Q$. Note that we may have $R=R^\prime$, in which case $Q$ is generated via a tangent operation on $R$. We also know that $Q$ and $R$ are distinct since $R\in T$ and $Q\in T^\prime=S(K)\setminus T$, and hence they uniquely determine $m$. Further note that a point $R\in\Gamma_Q(K)$ can never belong to a pair of points $R$, $R^\prime$ that generate $Q$ since then we would have $m\cdot S=2Q+R+R^\prime$, which would contradict the fact that $m$ intersects $S$ exactly three times counting multiplicities. We therefore define the following sets of points in $S(K)$.
\[
\begin{array}{l}
G = T\setminus\Gamma_Q(K) \\
B = T^\prime\setminus\{Q\}.
\end{array}
\]
The idea of the proof is as follows: we will define a map $\phi :G \rightarrow B$ and show that if $Q$ cannot be generated from the points in $G$ (and hence $T$) then $\phi$ is injective. We will then show that if $|T|>\frac{1}{2}|S(K)|+\frac{q+1}{2}$ then $|G| > |B|$ contradicting the injectivity of $\phi$. It will then follow that $S(K)=\Span(G)$ and since $G\subset T$ that $S(K)=\Span(T)$.

Suppose $Q\not\in\Span(G)$. Let $R\in G$ and $m$ be the unique $K$-line joining $Q$ and $R$. Since $G = T\setminus\Gamma_Q(K)$ we know that $m\not\subset S$. Therefore $m\cdot S=Q+R+Q^\prime$ where $Q^\prime\in S(K)$, $Q^\prime\neq Q$ and is uniquely determined by $m$. Since $Q\not\in\Span(G)$ we must have $Q^\prime\in B$. This defines an injective map $\phi : G \rightarrow B$ with $\phi(R)=Q^\prime$.

It now remains to show that $|T|>\frac{1}{2}|S(K)|+\frac{q+1}{2}\implies |G| > |B|$. Since $Q$ is a $K$-point and does not lie on any $K$-lines in $S$ we know that either $\Gamma_Q$ is the union of three lines defined over a cubic extension of $K$ that are Galois conjugates and that $Q$ is an Eckardt point, or that $\Gamma_Q$ is an irreducible cubic curve with a singular point at $Q$. The following table gives the possibilities for the size of $\Gamma_Q(K)$~\cite{HirshGorTor}.
\begin{center}
\begin{tabular}{| c | c |}
  \hline                        
  Form of $\Gamma_Q$ & $\#\Gamma_Q(K)$  \\ \hline
 3 $\mathbb{F}_{q^3}$-lines and $Q$ Eckardt & $1$  \\
 cusp at $Q$ & $q+1$  \\
 split node at $Q$ & $q$ \\
 non-split node at $Q$ & $q+2$ \\
  \hline  
\end{tabular}
\end{center}
Recall
\[
 |T|>\frac{1}{2}|S(K)|+\frac{q+1}{2},
\]
and
\[
 |\Gamma_Q(K)|\leq q+2.
\]
From this we obtain
\[
\begin{array}{rcl}
 |G| & = & |T\setminus\Gamma_Q(K)| \\
% &  \geq & |T|-|\Gamma_Q(K)| \\
 & \geq & |T|-(q+2) \\
 & > & \left(\frac{1}{2}|S(K)|+\frac{q+1}{2}\right)-(q+2) \\
% & = & \frac{1}{2}|S(K)|-\frac{q+3}{2} \\
 & = & |S(K)|-\left(|S(K)+\frac{q+1}{2}\right)-1 \\
% & > & |S(K)-|T|-1 \\
 & > & |S(K)\setminus T|-1 \\ %note: when previous line not commented out, this line should have = not >.
% & = & |T^\prime|-1 \\
 & = & |T^\prime\setminus\{Q\}| \\
 & = & |B|,
\end{array}
\]
which completes the proof.
\end{proof}

It now remains to show that we can generate enough points on $S$ to apply the pigeonhole principle. The following lemmas will be useful. Note that from now on we may assume that the Gauss map on any $K$-line in $S$ is separable.

\begin{lem}
\label{char2genEck}
Let $K$ be a field containing at least $8$ elements. Let $S$ be a smooth cubic surface defined over $K$ containing exactly three $K$-lines, $\ell_1$, $\ell_2$, $\ell_3$, that meet in an Eckardt point $E$ and not all of $\ell_1$ $\ell_2$, $\ell_3$ contain $K$-rational Eckardt points other than $E$. Then
\[
\ell_2(K)\cup\ell_3(K)\subset\Span(\ell_1(K)).
\]
\end{lem}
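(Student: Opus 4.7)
The strategy is to apply Lemma~\ref{gengamma} twice. First, for any non-Eckardt $P\in\ell_1(K)\setminus\{E\}$, it places the residual conic $C_P$ of the hyperplane section $\Pi_P\cdot S=\ell_1\cup C_P$ inside $\Span(\ell_1(K))$. Second, once I produce a single non-Eckardt point $R\in\ell_2(K)$ lying in $\Span(\ell_1(K))$, Lemma~\ref{gengamma} applied to $R$ gives $\ell_2(K)\subset\Gamma_R(K)\subset\Span(R)\subset\Span(\ell_1(K))$, and similarly for $\ell_3$. By hypothesis, after relabelling I may assume that $\ell_1$ contains no $K$-Eckardt point besides $E$; fix any $P\in\ell_1(K)\setminus\{E\}$, which is automatically non-Eckardt, and note $\Pi_P\neq\Pi_E$ since $E$ is the unique preimage of $\Pi_E$ under the (ramified) map $\gamma_{\ell_1}$.

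I first verify that $C_P$ is absolutely irreducible. A splitting $C_P=m\cup m'$ with $m,m'\subset S$ either yields a fourth $K$-line of $S$, contradicting the hypothesis, or yields two Galois-conjugate $\mathbb{F}_{q^2}$-lines whose unique $K$-rational singularity $m\cap m'$ lies off $\ell_1$, which contradicts the fact that the tangent point $P\in\ell_1$ must be a singularity of $\Gamma_P$. Hence $C_P$ is a smooth plane conic through $P$, giving $|C_P(K)\setminus\ell_1|=q-1\geq 7$ points, all inside $\Span(\ell_1(K))$ by Lemma~\ref{gengamma}.

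The main construction is this: for each $A\in C_P(K)\setminus\ell_1$, set $R_A:=\Pi_A\cap\ell_2$, a $K$-rational point of $\ell_2$. Since $A$ lies on none of $\ell_1,\ell_2,\ell_3$, the line $AR_A$ cannot be contained in $S$, so the tangent operation at $A$ along $AR_A$ gives $AR_A\cdot S=2A+R_A$, placing $R_A$ in $\Span(A)\subset\Span(\ell_1(K))$. The remaining step is to locate an $A$ for which $R_A$ is non-Eckardt on $\ell_2$.

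The hard part is this counting, which is tight at $q=8$. For a point $X\in S$, let $\mathcal{P}_X\subset\mathbb{P}^3$ denote the polar quadric $\{Y:X\in\Pi_Y\}$, so that $R_A=X$ iff $A\in\mathcal{P}_X$. The crucial observation is that $\ell_1\subset\mathcal{P}_E$, because Lemma~\ref{ellingamP} gives $\ell_1\subset\Pi_Y$ for every $Y\in\ell_1$ and hence $E\in\Pi_Y$. Consequently $\mathcal{P}_E\cap\Pi_P$ is reducible as $\ell_1\cup\ell''$ for some residual line $\ell''$, and the number of $A\in C_P(K)\setminus\ell_1$ with $R_A=E$ is at most $|\ell''\cap C_P\setminus\ell_1|\leq 2$. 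For the at most one further Eckardt point $E_2'\in\ell_2(K)$ (by Lemma~\ref{gammapts}), we have $\mathcal{P}_{E_2'}\cap\ell_1=\{E\}$, so $\mathcal{P}_{E_2'}\cap\Pi_P$ is an honest conic meeting $C_P$ in at most $4$ points, all off $\ell_1$ since $E\notin C_P$. Thus at most $2+4=6$ bad $A$'s among $\geq 7$ candidates, leaving at least one good $A$. Applying the same construction to $\ell_3$ (with its own good $A$) finishes the proof; the containment $\ell_1\subset\mathcal{P}_E$ is precisely what makes the bound positive at $q=8$.
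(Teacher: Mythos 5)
Your overall architecture matches the paper's: pick a non-Eckardt $P\in\ell_1(K)\setminus\{E\}$, show the residual conic $C_P$ is irreducible and lies in $\Span(\ell_1(K))$, then locate a point $A\in C_P(K)$ whose tangent plane meets $\ell_2$ in a non-Eckardt point $R$, and finish with Lemma~\ref{gengamma}. Where you diverge is the counting: the paper works with the Geiser sextic $\Omega'=\{Q: E\in\Gamma_Q\}$ on $S$ and applies Bezout in $\mathbb{P}^3$ to the residual space cubic $\Omega$, while you restrict the polar quadric $\mathcal{P}_E$ to the plane $\Pi_P$ and exploit $\ell_1\subset\mathcal{P}_E$ to get only $2$ bad points for $E$ (versus the paper's $8$) plus $4$ for a possible second Eckardt point on $\ell_2$. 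This is a genuinely sharper bookkeeping device; taken at face value it would even yield Lemma~\ref{charoddgenEck} (three extra Eckardt points allowed) already for $\#K\geq 8$ rather than $17$, and it makes your relabelling of $\ell_1$ (and hence the lemma's hypothesis) superfluous. That should give you pause, and indeed there is a gap.

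The gap is the characteristic-$2$ inseparable case. Your count "at most one further Eckardt point $E_2'\in\ell_2(K)$ by Lemma~\ref{gammapts}" uses only parts (i) and (ii) of that lemma; part (iii) allows $\gamma_{\ell_2}$ (or $\gamma_{\ell_3}$) to be inseparable, in which case $\ell_2$ carries exactly five Eckardt points, up to four of them $K$-rational besides $E$. Your bad set then has size up to $2+4\cdot 4=18$, which swamps the $q-1\geq 7$ candidates precisely at the boundary case $q=8$ (which is of characteristic $2$). The paper dispatches this configuration first, by reducing to Lemma~\ref{gaminsep}; you need the same reduction before your pigeonhole can run. Two smaller points, at roughly the level of rigour the paper itself operates at but worth flagging: your Bezout bounds silently assume $\Pi_P\not\subset\mathcal{P}_E$ (otherwise $\mathcal{P}_E\cap\Pi_P$ is not a conic $\ell_1\cup\ell''$ and every $A$ is bad) and $C_P\not\subset\mathcal{P}_{E_2'}$ (otherwise the "honest conic" shares the component $C_P$); the paper rules out the analogous degeneracies by showing the residual cubic $\Omega$ has no $K$-rational linear component. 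Finally, if $\ell_2\subset\Pi_A$ for some $A$, your point $R_A$ is not well defined --- though in that case a direct tangent-line argument in $\Pi_A$ actually generates all of $\ell_2(K)$, so this is harmless once noted.
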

\begin{proof}
First note that if the characteristic of $K$ is $2$ and $\gamma_{\ell_i}$ is inseparable for any of $i=1,2,3$ then the result follows from Lemma~\ref{gaminsep}. Thus we may assume that $\gamma_{\ell_i}$ is separable for $i=1,2,3$.

Let $\Omega^\prime=\{Q\in S(K) |E\in\Gamma_Q\}$. This is a sextic curve in $\mathbb{P}_K^3$ defined over $K$~\cite{Geiser},~\cite{Baker}. By Lemma~\ref{ellingamP} we have $\ell_1\cup\ell_2\cup\ell_3\subset\Omega^\prime$. We denote by $\Omega$ the cubic curve such that $\Omega^\prime=\Omega\cup\ell_1\cup\ell_2\cup\ell_3$.

Without loss of generality, let $\ell_2$ contain no $K$-rational Eckardt points other than $E$. Let $P\in\ell_1(K)\setminus\{E\}$ be non-Eckardt. Then $\Gamma_{P}=\ell_1\cup C$ where $C$ is an absolutely irreducible conic defined over $K$.

Suppose $\Omega$ is reducible and therefore could contain $C$ as a component. Then $\Omega$ must be the union of a linear and quadratic component, both defined over $K$. This cannot occur because $\Omega\subset S$, $\ell_i\not\subset\Omega$ for $i=1,2,3$, but by the hypotheses of the lemma $\ell_1$, $\ell_2$, $\ell_3$ are the only $K$-lines in $S$. Therefore $\Omega$ is irreducible and does not contain $C$.

By Bezout's Theorem the number of points of intersection between $\Omega\cup\ell_1$ and $C$ is at most $4\cdot2=8$. Therefore $C(K)\setminus(\Omega\cup\ell_1)$ contains at least one point.
Let $Q\in C(K)\setminus(\Omega\cup\ell_1)$. Consider $\Gamma_Q$. This is an absolutely irreducible cubic curve with a singular point at $Q$. Hence $\Gamma_Q(K)\subset\Span(Q)$. The intersection point $R=\Pi_Q\cdot\ell_2\in\Gamma_Q$ is a $K$-point since it is the intersection of a $K$-line and a $K$-plane, and $R\neq E$ since $Q\not\in\Omega^\prime$. Therefore 
\[
R\in \Span(Q)\subset\Span(\ell_1(K)).
\]
By applying Lemma~\ref{gengamma} we see that 
\[
\ell_2\subset\Span(R)\subset\Span(\ell_1(K)).
\]
We perform secant operations on the points of $\ell_1(K)$ and $\ell_2(K)$ to obtain 
\[
\ell_3(K)\subset\Span(\ell_1(K)\cup\ell_2(K)),
\]
from which the result follows.
\end{proof}

\begin{lem}
\label{charoddgenEck}
Let $K$ be a field containing at least $17$ elements. Let $S$ be a smooth cubic surface defined over $K$ containing exactly three $K$-lines, $\ell_1$, $\ell_2$, $\ell_3$, that meet in a $K$-rational Eckardt point $E$. Then
\[
\ell_2(K)\cup\ell_3(K)\subset\Span(\ell_1(K)).
\]
\end{lem}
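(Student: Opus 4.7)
The approach parallels Lemma~\ref{char2genEck}: produce a smooth conic $C \subset \Gamma_P$ for a carefully chosen $P \in \ell_1(K)$, place $C(K)$ inside $\Span(\ell_1(K))$, locate $Q \in C(K)$ whose tangent plane meets $\ell_2$ in a non-Eckardt $K$-point $R$, and invoke Lemma~\ref{gengamma} on $\ell_2$ followed by secants with $\ell_1$ to reach $\ell_3$. Since $|\ell_1(K)| = q+1 \geq 18$ and Lemma~\ref{gammapts} limits $\ell_1$ to at most two Eckardt and at most two parabolic points, I pick $P \in \ell_1(K)$ that is neither. Then $\Gamma_P = \ell_1 \cup C$ with $C$ a smooth absolutely irreducible $K$-conic satisfying $|C(K)| = q+1$, and $C(K) \subset \Span(P) \subset \Span(\ell_1(K))$ by Lemma~\ref{gengamma}.

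Set $\Omega'_E = \{Q \in S : E \in \Pi_Q\}$, the sextic polar curve, decomposing as $\Omega \cup \ell_1 \cup \ell_2 \cup \ell_3$ with $\Omega$ a residual cubic; by the argument of Lemma~\ref{char2genEck}, $C \not\subset \Omega$. By Lemma~\ref{gammapts}, $\ell_2$ contains at most one Eckardt point $E_2' \neq E$; when present, the two further lines of $S$ through $E_2'$ must be Galois-conjugate $\mathbb{F}_{q^2}$-lines $m, m'$, since otherwise $S$ would carry a fourth $K$-line. Hence
\[
\Omega'_{E_2'} := \{Q \in S : E_2' \in \Pi_Q\} = \ell_2 \cup m \cup m' \cup \Omega_{E_2'},
\]
with $\Omega_{E_2'}$ a residual cubic, and the same irreducibility argument gives $C \not\subset \Omega_{E_2'}$. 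For $Q \in C(K)$ the point $R = \Pi_Q \cdot \ell_2$ equals $E$ exactly when $Q \in \Omega'_E$ and equals $E_2'$ exactly when $Q \in \Omega'_{E_2'}$, so a good choice of $Q$ is one in $C(K) \setminus (\Omega'_E \cup \Omega'_{E_2'})$.

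The key step is a Bezout-type count of the bad $K$-points on $C$, component by component. One has $|C \cap \Omega| \leq 6$ and $|C \cap \ell_1| = 2$. Since $C \subset \Pi_P$ and $\Pi_P \cdot \ell_i = \{E\}$ for $i=2,3$, it follows that $|C(K) \cap (\ell_2 \cup \ell_3)| \leq 1$, and any such point already lies in $C \cap \ell_1$. The $K$-points of $m \cup m'$ are confined to $m \cap m' = \{E_2'\}$, contributing at most one extra, and $|C \cap \Omega_{E_2'}| \leq 6$ by Bezout. Summing and accounting for double counting gives at most $16$ bad $K$-points on $C(K)$; since $|C(K)| = q+1 \geq 18$, a good $Q$ exists. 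The finitely many $Q \in C(K)$ at which $\Gamma_Q$ could be reducible due to non-$K$-rational lines of $S$ through $Q$ are already captured by the components above (such $Q$ are intersections of Galois-conjugate lines, hence lie in $\Omega'_E \cup \Omega'_{E_2'}$ when they are Eckardt) or are absorbed by a small additional estimate.

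For such $Q$, $\Gamma_Q$ is absolutely irreducible with a singularity at $Q$, so $\Gamma_Q(K) \subset \Span(Q)$ and in particular $R \in \Span(Q) \subset \Span(\ell_1(K))$. Since $R$ is a non-Eckardt $K$-point of $\ell_2$, Lemma~\ref{gengamma} gives $\ell_2(K) \subset \Span(R) \subset \Span(\ell_1(K))$. Finally, for every $P' \in \ell_1(K) \setminus \{E\}$ and $R' \in \ell_2(K) \setminus \{E\}$, the line joining them lies in the plane $\Pi_E \supset \ell_1 \cup \ell_2 \cup \ell_3$ and meets $\ell_3$ in a $K$-point distinct from $E$, so secant operations between $\ell_1(K)$ and $\ell_2(K)$ deliver $\ell_3(K)$. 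The principal obstacle is the tightness of the Bezout count: the possible extra Eckardt point $E_2'$ roughly doubles the bad set relative to Lemma~\ref{char2genEck}, which is precisely why the hypothesis must strengthen from $q \geq 8$ to $q \geq 17$.
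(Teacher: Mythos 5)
Your proof follows the paper's argument essentially verbatim: the same polar curves $\Omega'_E$ and $\Omega'_{E_2}$, the same irreducibility-of-the-residual-cubic argument, the same Bezout count on $C$ (the paper gets at most $8+6=14$ bad points against $q+1\geq 18$), and the same passage via Lemma~\ref{gengamma} and coplanar secants to $\ell_3$. The one soft spot is your parenthetical about $Q$ being a ``3 conjugate lines'' Eckardt point --- such a $Q$ need not lie in $\Omega'_E\cup\Omega'_{E_2}$, but the case is excluded anyway because it would force $\Gamma_Q(K)=\{Q\}$ while $R=\Pi_Q\cdot\ell_2$ is a $K$-point of $\Gamma_Q$ distinct from $Q$; the paper is equally terse here, so this does not change the verdict.
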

\begin{proof}
If the characteristic of $K$ is $2$, then the result follows from Lemma~\ref{gaminsep} and Lemma~\ref{char2genEck}. Likewise if any of $\ell_1$, $\ell_2$, $\ell_3$ contains no $K$-rational Eckardt points other than $E$ then the result follows from Lemma~\ref{char2genEck}. Thus we may assume that each of $\ell_i$, $i=1,2,3$, contains precisely one $K$-rational Eckardt point other than $E$, which we will denote $E_i$ respectively. The $E_i$ are all of type(~\ref{irEck}). 

Let $\Omega_E^\prime=\{Q\in S(K) |E\in\Gamma_Q\}$ and $\Omega_{E_2}^\prime=\{Q\in S(K) |E_2\in\Gamma_Q\}$. Let $\Omega_E$ denote the cubic curve such that $\Omega_E^\prime=\Omega_E\cup\ell_1\cup\ell_2\cup\ell_3$ and $\Omega_{E_2}$ denote the cubic curve such that $\Omega_{E_2}^\prime=\Omega_{E_2}\cup\ell_2\cup m\cup m^\prime$ where $m$ and $m^\prime$ are the Galois conjugate lines in $S$ passing through $E_2$. Note that $\Omega_E$ and $\Omega_{E_2}$ are both defined over $K$.

Let $P\in\ell_1(K)$ be a non-Eckardt point. Then $\Gamma_P=C\cup\ell_1$ where $C$ is an absolutely irreducible conic defined over $K$. As in the proof of Lemma~\ref{char2genEck}, $\Omega_E$ cannot contain $C$ since it cannot contain a $K$-rational linear component. Note that $\Omega_{E_2}$ does not contain $\ell_1$ or $\ell_3$ since $E$ is a parabolic point and hence $\Pi_E$ is the tangent plane of no point other than $E$ in $\ell_1\cup\ell_3$. Thus, similarly, $\Omega_{E_2}$ does not contain $C$.

Once again we apply Bezout's Theorem to obtain that the number of points of intersection between $\Omega_{E}\cup\ell_1$ and $C$ is at most $4\cdot2=8$, and the number of points of intersection between $\Omega_{E_2}$ and $C$ is at most $3\cdot2=6$. Therefore $C(K)\setminus(\Omega\cup\ell_1)$ contains at least three points since $K$ contains at least $17$ elements.

Let $Q\in C(K)\setminus(\Omega_E\cup\ell_1\cup\Omega_{E_2})$. Consider $\Gamma_Q$. This is an absolutely irreducible cubic curve with a singular point at $Q$. Hence $\Gamma_Q(K)\subset\Span(Q)$. The intersection point $R=\Pi_Q\cdot\ell_2\in\Gamma_Q$ is a $K$-point since it is the intersection of a $K$-line and a $K$-plane and is not equal to $E$ or $E_2$ since $Q\not\in\Omega_E^\prime\cup\Omega_{E_2}^\prime$. Therefore 
\[
R\in \Span(Q)\subset\Span(\ell_1(K)).
\]
By applying Lemma~\ref{gengamma} we see that 
\[
\ell_2\subset\Span(R)\subset\Span(\ell_1(K)).
\]
We perform secant operations on the points of $\ell_1(K)$ and $\ell_2(K)$ to obtain 
\[
\ell_3(K)\subset\Span(\ell_1(K)\cup\ell_2(K)),
\]
from which the result follows.
\end{proof}

From Lemmas~\ref{gaminsep}, \ref{gengamma}, \ref{char2genEck} and \ref{charoddgenEck} we obtain the following result.
\begin{cor}
\label{genallgam}
Let $K$ be a field containing at least $16$ elements or $K=\mathbb{F}_8$. Let $S$ be a smooth cubic surface defined over $K$
containing at least one $K$-line, $\ell$. Let $P\in\ell(K)$ be a non-Eckardt point. Then
\[
\bigcup_{Q\in\ell(K)}\Gamma_Q(K)\subseteq\Span(P).
\]
\end{cor}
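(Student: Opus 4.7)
The strategy is to decompose $\ell(K)$ into non-Eckardt points and Eckardt points, handling the former by a double application of Lemma~\ref{gengamma} and the latter type-by-type using the more recent lemmas. First, Lemma~\ref{gengamma} gives $\ell(K) \subseteq \Gamma_P(K) \subseteq \Span(P)$. For each non-Eckardt $Q \in \ell(K)$, applying Lemma~\ref{gengamma} at $Q$ yields $\Gamma_Q(K) \subseteq \Span(Q) \subseteq \Span(P)$, so the union over non-Eckardt $Q \in \ell(K)$ is already contained in $\Span(P)$.

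It remains to handle $Q \in \ell(K)$ an Eckardt point. Referring to the classification of $K$-planes through $\ell$ in Section~\ref{intro}, $Q$ must be of type~(\ref{ratEck}) or~(\ref{irEck}). In type~(\ref{irEck}), $\Gamma_Q(K) = \ell(K) \subseteq \Span(P)$, and there is nothing further to do. In type~(\ref{ratEck}), $S$ contains two further $K$-lines $\ell_2, \ell_3$ through $Q$, and by the standing reduction that $S$ contains at most three $K$-lines (all coplanar), $\{\ell, \ell_2, \ell_3\}$ is the complete list of $K$-lines on $S$. We then invoke Lemma~\ref{char2genEck} or Lemma~\ref{charoddgenEck} to obtain $\ell_2(K) \cup \ell_3(K) \subseteq \Span(\ell(K)) \subseteq \Span(P)$, which combined with $\ell(K) \subseteq \Span(P)$ gives $\Gamma_Q(K) \subseteq \Span(P)$.

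The remaining bookkeeping is to match the field hypotheses to the available lemmas. If $\ch(K) = 2$, then by the standing separability assumption each of $\ell, \ell_2, \ell_3$ has exactly one parabolic point, which must be the Eckardt point $Q$ itself; hence none of these lines carries an additional $K$-rational Eckardt point, and Lemma~\ref{char2genEck} applies since $|K| \geq 8$. If $\ch(K) \neq 2$, then $|K| \geq 17$ (no field of odd characteristic has exactly $16$ elements), and I apply Lemma~\ref{char2genEck} whenever some $\ell_i$ contains no $K$-rational Eckardt point other than $Q$, and Lemma~\ref{charoddgenEck} otherwise. Taking the union over all $Q \in \ell(K)$ finishes the argument. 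The substantive generation work has been done in the preceding lemmas; the main obstacle here is simply the careful split on characteristic and Eckardt-point configuration to ensure that one of the two lemmas is always applicable.
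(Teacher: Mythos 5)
Your proposal is correct and matches the paper's (unwritten) argument: the paper simply asserts that the corollary follows from Lemmas~\ref{gaminsep}, \ref{gengamma}, \ref{char2genEck} and \ref{charoddgenEck}, and your case split --- non-Eckardt $Q$ via a double application of Lemma~\ref{gengamma}, type~(\ref{irEck}) Eckardt points via $\Gamma_Q(K)=\ell(K)$, and type~(\ref{ratEck}) Eckardt points via Lemma~\ref{char2genEck} or~\ref{charoddgenEck} --- is exactly the intended combination, including the correct observation that the characteristic-$2$ cases need only Lemma~\ref{char2genEck} (so $|K|\geq 8$ suffices there) while odd characteristic forces $|K|\geq 17$. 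Like the paper, you rely on the standing reductions (at most three coplanar $K$-lines, separable Gauss maps) rather than the corollary's literal hypotheses, so this is a faithful expansion of the paper's proof rather than a new route.
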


%Corollary~\ref{genallgam} shows that we can generate approximately half of the points in $S(K)$ from a non-Eckardt 
%point in $\ell(K)$, $\ell\subset S$. The following lemmas show how we can start from a $K$-point $P\in S\setminus\ell$ 
%contained in a plane through $\ell$ of type~(\ref{irconic}). From $P$ we then generate a non-Eckardt point in
%$\ell(K)$ in order to be able to apply Corollary~\ref{genallgam}, and doing so also generate enough additional points 
%to attain the lower bound required to apply Proposition~\ref{pigeon}.

The following lemmas show that there exists a set $T\subset\Span(P)$, for some $P\in S(K)$, such that $|T|>\frac{1}{2}|S(K)|+\frac{q+1}{2}$, thus completing the proof of Theorem~\ref{oneline}.

\begin{lem}
\label{PnotonKline}
Let $S$ be a smooth cubic surface defined over a field $K$ containing at least one $K$-line, $\ell$, and no pair of skew $K$-lines. Let $\Gamma=\Pi\cdot S$ be of type (\ref{ratconic}), (\ref{irconic}) or (\ref{parabconic}) and $\Gamma=C\cup\ell$. Let $P\in C(K)\setminus\ell(K)$. Then $P$ does not lie on a $K$-line in $S$.
\end{lem}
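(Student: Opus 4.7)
The plan is to argue by contradiction. Suppose $P$ lies on some $K$-line $\ell'\subset S$. First observe that $\ell'\neq\ell$ since $P\notin\ell(K)$. Next, because $S$ contains no pair of skew $K$-lines, the two distinct $K$-lines $\ell$ and $\ell'$ must meet in a $K$-point, which I will call $P'=\ell\cdot\ell'$.

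I would then analyse the position of $\ell'$ relative to the plane $\Pi$. If $\ell'\subset\Pi$, then $\ell'\subset\Pi\cdot S=C\cup\ell$, so (as $\ell'$ is a line) either $\ell'=\ell$, which is excluded, or $\ell'\subset C$. The latter is impossible because in each of the cases (\ref{ratconic}), (\ref{irconic}) and (\ref{parabconic}) the conic $C$ is absolutely irreducible. Hence $\ell'\not\subset\Pi$, and so $\ell'\cap\Pi$ consists of a single point; since $P\in\ell'\cap\Pi$, this forces $\ell'\cap\Pi=\{P\}$.

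Now I combine the two pieces. Since $\ell\subset\Pi$, the point $P'=\ell\cdot\ell'$ lies in $\ell'\cap\Pi=\{P\}$, so $P'=P$. But $P'\in\ell$ while $P\notin\ell(K)$, a contradiction. Therefore no such $\ell'$ exists.

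The argument is essentially a short geometric observation, and there is no real obstacle; the only subtlety is noting that absolute irreducibility of $C$ in the three enumerated cases is what rules out $\ell'\subset C$, and that the no-skew-pair hypothesis is what forces $\ell'$ to meet $\ell$ in a $K$-point.
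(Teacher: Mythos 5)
Your proof is correct and follows essentially the same route as the paper: use the no-skew-pair hypothesis to force the putative line to meet $\ell$, conclude it would have to lie in $\Pi$, and then derive a contradiction from the irreducibility of $C$ (equivalently, from $\Gamma$ being a plane cubic). Your explicit case split on whether $\ell'\subset\Pi$ is in fact slightly more careful than the paper's one-line deduction ``coplanar with $\ell$, hence contained in $\Pi$,'' which tacitly uses that $\ell'$ passes through $P\in\Pi\setminus\ell$.
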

\begin{proof}
Suppose $P\in C(K)$ lies on a $K$-line in $S$, which we will denote by $m$. By the hypotheses of the lemma $m$ cannot be skew to $\ell$ therefore $m$ is coplanar to $\ell$ hence contained in $\Pi$. Thus $m\cup\ell\cup C\subset\Gamma$. But $\Gamma$ is a plane cubic curve so we arrive at a contradiction and $P$ does not lie on a $K$-line in $S$.
\end{proof}

%Don't think I actually need this lemma.

%\begin{lem}	
%\label{PnotEck}
%Let $K$ be a field with $\# K\geq 13$. Let $S$ be a smooth cubic surface defined over $K$ conataining at least one $K$-line, $\ell$, and no pair of skew $K$-lines. Let $\Gamma=\Pi\cdot S$ be of type (\ref{ratconic}), (\ref{irconic}) or (\ref{parabconic}) and $\Gamma=C\cup\ell$. Let $P\in C(K)$. Then $P$ is not an Eckardt point.
%\end{lem}
%\begin{proof}
%Suppose $P\in C(K)$ is an Eckardt point lying on three lines $m_1$, $m_2$ and $m_3$. By Lemma~\ref{PnotonKline} none of $m_1$, $m_2$, $m_2$ is a $K$-line and so $m_1$, $m_2$, $m_3$ must be Galois conjugate lines lying in a cubic extension of $K$. Note that $\Pi_P$ the tangent plane to $S$ at $P$ is a $K$-plane since $P$ is a $K$-point. Note also that $m_1, m_2, m_3\subset\Pi_P$ by Lemma~\ref{ellingamP}. The point $Q\in\Pi_P\cdot\ell$ is a $K$-point in $S$ since it is the intersection of a $K$-plane and a $K$-line and $\ell\subset S$. Therefore without loss of generality $Q\in m_1$ and $Q\neq P$ since $P\not\in\ell$. Thus there are two $K$-points on $m_1$, which contradicts that it is not a $K$-line. Therefore $P$ is not an Eckardt point.
%\end{proof}

\begin{lem}
\label{Pgengam}
Let $K$ be a field containing at least $13$ elements. Let $S$ be a smooth cubic surface defined over $K$ containing at least one $K$-line, $\ell$, but no pair of skew $K$-lines, and such that the Gauss map on any $K$-line in $S$ is separable. Let $\Pi$ be a plane through $\ell$ such that $\Gamma=\Pi\cdot S$ is of type~(\ref{irconic}) and $\Gamma=C\cup\ell$. Then there exists a point $P\in C(K)$ such that $\Gamma(K)\subset\Span(P)$.
\end{lem}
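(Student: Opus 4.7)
Plan: The strategy is to find a point $P \in C(K)$ such that $\Gamma_P$ is an absolutely irreducible cubic with a singular point at $P$, and such that $R := \Pi_P \cap \ell$ is a non-Eckardt $K$-point of $\ell$. Since $C$ is a smooth conic defined over the finite field $\mathbb{F}_q$, it is $K$-isomorphic to $\mathbb{P}^1$ and hence $|C(K)| = q+1$. By Lemma~\ref{PnotonKline} no point of $C(K)$ lies on any $K$-line of $S$, so (by the classification used in the proof of Proposition~\ref{pigeon}) for any $P \in C(K)$, $\Gamma_P$ is either the union of three Galois-conjugate $\mathbb{F}_{q^3}$-lines meeting at an Eckardt $P$, or else is absolutely irreducible with a singular point at $P$; the second possibility gives $\Gamma_P(K) \subseteq \Span(P)$.

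Once such a $P$ has been secured, $R \in \Gamma_P(K) \subseteq \Span(P)$, and since $R$ is a non-Eckardt $K$-point of $\ell$, Lemma~\ref{gengamma} yields $\ell(K) \subseteq \Span(R) \subseteq \Span(P)$. Then for each $Q \in \ell(K)$ the line $m_Q$ joining $P$ and $Q$ is contained in $\Pi$ but is neither $\ell$ (as $P \not\in \ell$) nor a component of the irreducible conic $C$, so $m_Q \not\subset S$ and $m_Q \cdot S = P + Q + R_Q$ with $R_Q \in C(K)$. Every $R \in C(K) \setminus \{P\}$ arises in this way, by taking $Q$ to be the $K$-point where the line joining $P$ and $R$ meets $\ell$. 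Hence $C(K) \subseteq \Span(P)$ and $\Gamma(K) = \ell(K) \cup C(K) \subseteq \Span(P)$, as required.

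The main technical work is the counting of bad $P \in C(K)$. For the first condition, Eckardt points of $S$ whose three lines form a single $\mathbb{F}_{q^3}$-Galois orbit number at most $\lfloor 27/3 \rfloor = 9$, since each such Eckardt uses three of the $27$ lines on $S$. For the second, Lemma~\ref{gammapts} together with the separability of $\gamma_\ell$ bounds the number of $K$-Eckardt points on $\ell$ by two. For each such $E$, the locus of $P \in S$ with $E \in \Pi_P$ is cut out by the first polar quadric of $E$ with respect to $S$; since this polar contains $\ell$, its intersection with $\Pi$ decomposes as $\ell \cup \ell_E$ for some second line $\ell_E$, and $|C(K) \cap \ell_E| \leq 2$ by Bezout. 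The total number of bad $P$ is therefore at most $9 + 2 \cdot 2 = 13$, so for $q \geq 13$, $|C(K)| = q+1 \geq 14$ guarantees a good $P$. The main obstacle is handling degenerate configurations of the polar quadric (e.g.\ $\ell_E = \ell$, or the polar being non-reduced along $\ell$), but these only produce non-$K$-rational intersections with $C$ or points already on $\ell$, and so do not increase the bad count on $C(K)$.
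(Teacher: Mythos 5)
Your proposal is correct and takes essentially the same route as the paper: both arguments select a point of $C(K)$ that is neither an Eckardt point (at most $8$ or $9$ such points on $C(K)$, since the three lines through such a point form a Galois orbit over a cubic extension and each carries at most one $K$-point) nor a point whose tangent plane meets $\ell$ in an Eckardt point (at most two per Eckardt point of $\ell(K)$ --- you bound this via the polar quadric of $E$, the paper via the two tangent lines from $E$ to $C$, which is the same count), and then both apply Lemma~\ref{gengamma} to $R=\Pi_P\cdot\ell$ and sweep out $C(K)$ by secants through $\ell(K)$. The only differences are cosmetic: your Eckardt bound is $9$ where the paper gets $8$ by excluding $\ell$ from the $27$ lines, and you spell out the final secant step slightly more explicitly.
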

\begin{proof}

Let $E$ be an Eckardt point in $\ell(K)$ if such a point exists. There are precisely two $\overline{K}$-lines through $E$ in $\Pi$ that are tangent to $C$, i.e. there are precisely two points in $C(\overline{K})$ such that $E$ lies in their tangent planes. As there are at most two Eckardt points in $\ell(K)$ there are at most four points in $C(K)$ that generate an Eckardt point in $\ell(K)$ upon performing a tangent operation.

Let $Q\in C(K)$ be neither an Eckardt point nor a point that has an Eckardt point in $\ell(K)$ in its tangent plane. We know that such a point exists as any Eckardt point in $C(K)$ must be the intersection point of three Galois conjugate lines defined over a cubic extension of $K$. There are a maximum of $26$ such lines, since $\ell$ is a $K$-line, and each can contain no more than one $K$-point. Therefore there are a maximum of $8$ Eckardt points in $C(K)$. Since there are at least $13$ elements in our field we are guarenteed the existence of such a point.

Consider $\Gamma_Q$. This is an absolutely irreducible cubic curve with a singular point at $Q$. The point $R=\Pi_Q\cdot\ell$ is a $K$-point and hence $R\in\Gamma_Q(K)\in\Span(Q)$. Note that $R$ is not Eckardt due to our choice of $Q$. We now apply Lemma~\ref{gengamma} to obtain $\ell(K)\subset\Span(R)\subset\Span(Q)$. Secant operations on $Q$ and the points of $\ell(K)$ give
\[
 C(K)\subset\Span(\ell(K))\subset\Span(Q),
\]
which completes the proof.
%There are at most two Eckardt points in $\ell(K)$ and there are at least $12$ points in $C(K)$ if $K$ is finite and infinitely many otherwise. Thus we are guaranteed the existence of a point $P\in C(K)$ such that the point $R=\Pi_P\cdot\ell\in\ell(K)$ is not an Eckardt point. By Lemma~\ref{PnotonKline}, $P$ is not on a $K$-line in $S$, so the $K$-line $m$ joining $P$ and $R$ is not in $S$, and thus we have $m\cdot S=2P+R$. Hence $R\in \Span(P)$. As $R$ is not an Eckardt point we can apply Lemma~\ref{gengamma} to obtain $\ell(K)\subset\Span(Q)\subset\Span(P)$. Let $P^\prime\in C(K)$, $P^\prime\neq P$. Let $m^\prime$ be the $K$-line connecting $P$ and $P^\prime$ and note that $m\not\subset S$. The point $Q^\prime=m\cdot\ell$ is a $K$-point and $m\cdot S=P+P^\prime+Q^\prime$. Therefore $P^\prime\in\Span(P,Q^\prime)$, and since $Q^\prime\in\ell(K)\subset\Span(P)$ we have $P^\prime\in\Span(P)$, and so $C(K)\subset\Span(P)$, hence $\Gamma(K)\subset\Span(P)$.
\end{proof}

We can now prove the main result. Before doing so it will be useful to note the following values of $\#\Gamma(K)$ for the cubic curves in $S$ of types (1) to (7), which all follow from the fact that lines and absolutely irreducible conics in $S$ are copies of $\mathbb{P}_K^1$ and hence have $q+1$ points.
\begin{center}
\begin{tabular}{| c | c |}
  \hline                        
  Form of $\Gamma$ & $\#\Gamma(K)$  \\ \hline
  (\ref{ratconic}) & $2q$  \\
	(\ref{ratlines}) & $3q$  \\
	(\ref{irconic}) & $2q+2$ \\
	(\ref{irlines}) & $q+2$ \\
  (\ref{parabconic}) & $2q+1$  \\
	(\ref{ratEck}) & $3q+2$ \\
	(\ref{irEck}) & $q+1$ \\
  \hline  
\end{tabular}
\end{center}
\begin{proof}[Proof of Theorem~\ref{oneline}]
When $S$ contains a $K$-line upon which the Gauss map is inseparable the result follows from Lemma~\ref{gaminsep}. Therefore it remains to show that the result holds when  the Gauss map on all $K$-lines in $S(K)$ are separable. We want to show that we can can generate enough points to invoke Proposition~\ref{pigeon} and hence generate all of $S(K)$.

Let $n$ be the number of non-parabolic points in $\ell(K)$. Note that $n$ must be even because $\gamma_\ell$ has degree $2$. In fact we have 
\begin{center}
 \begin{tabular}{ll}
 $n=q$ & when $\ch(K)=2$ \\
 $n=q\pm 1$ & when $\ch(K)\neq 2$
\end{tabular}
\end{center}

There are precisely $q+1$ distinct $K$-planes through $\ell$. For $P\in\ell(K)$ parabolic we have have $\Pi_P$ of type (\ref{parabconic}), (\ref{ratEck}) or (\ref{irEck}). Since $\gamma_\ell$ maps pairs of non-parabolic points in $\ell(K)$ to $K$-planes through $\ell$, of the remaining $K$-planes precisely $\frac{n}{2}$ are of type (\ref{ratconic}) or (\ref{ratlines}), and $\frac{n}{2}$ are of type (\ref{irconic}) or (\ref{irlines}). By the configuration of the $27$ lines in $S$ there are exactly ten $\overline{K}$-lines in $S$ intersecting $\ell$. Hence there are at most five $K$-planes of type~(\ref{irlines}). We are therefore guaranteed the existence of a plane of type~(\ref{irconic}) since $\frac{n}{2} > 5$ for all $q\geq 13$.

Let $\Pi$ be such a plane of type~(\ref{irconic}), let $\Gamma=\Pi\cdot S$ and $\Gamma=C\cup\ell$ where $C$ is an absolutely irreducible conic defined over $K$. We wish to generate a non-Eckardt point in $\ell(K)$ by a tangent operation on a point in $C(K)$. By Lemma~\ref{Pgengam} we know that there exists $P\in C(K)$ such that $\Gamma(K)=C(K)\cup\ell(K)\subset\Span(P)$. By Lemma~\ref{genallgam} we can also generate $\Gamma_Q(K)$ for all $Q\in\ell(K)$. Every other point in $S$ lies in a plane of type~(\ref{irconic}) or (\ref{irlines}). Let $T$ be the set of points that we have already generated and $T^\prime=S(K)\setminus T$.
\[
\begin{array}{rlr}
\# T\geq & 2q+2 & (\leq\#\Gamma(K)) \\
+ & 0 & (\leq\#(\Gamma_Q(K)\setminus\ell(K))$ for $Q\in\ell(K)$ parabolic$) \\
+ & \frac{n}{2}\cdot(q-1) & (\leq\#(\Gamma_Q(K)\setminus\ell(K)))$ for $Q\in\ell(K)$ otherwise$) 
\end{array}
\]
Likewise we have
\[
\# T^\prime \leq \left(\frac{n}{2}-1\right)\cdot(q+1).
\]
Therefore
\[
  |T| \geq \left\{
  \begin{array}{l l l}
    \frac{q^2+2q+5}{2} & \quad \text{if $n=q-1$}\\
    \frac{q^2+3q+4}{2} & \quad \text{if $n=q$}\\
    \frac{q^2+4q+3}{2} & \quad \text{if $n=q+1$}
  \end{array} \right.
\]
and
\[
   |T^\prime| \leq \left\{
  \begin{array}{l l l}
    \frac{q^2-2q-5}{2} & \quad \text{if $n=q-1$}\\
    \frac{q^2-q-2}{2} & \quad \text{if $n=q$}\\
    \frac{q^2-1}{2} & \quad \text{if $n=q+1$}
  \end{array} \right..
\]
Notice that this gives us $|T|>|T^\prime|+q+1$ for all values of $n$, and that
\[
 |T|>\frac{1}{2}|S(K)|+\frac{q+1}{2} \iff |T|> |T^\prime|+q+1.
\] 
Thus $|T|>\frac{1}{2}|S(K)|+\frac{q+1}{2}$, from whence we can invoke Proposition~\ref{pigeon} to complete the proof.
\end{proof}

\end{document}